\theoremstyle{plain}
\newtheorem{theorem}{Theorem}
\newtheorem*{theorem*}{Theorem}
\newtheorem{theoremK}{Theorem}
\newtheorem{proposition}{Proposition}
\newtheorem*{proposition*}{Proposition}
\newtheorem{lemma}{Lemma}
\newtheorem*{problem*}{Problem}
\newtheorem*{remark*}{Remark}
\theoremstyle{definition}
\newcommand{\R}{\mathbb R}
\newcommand{\E}{\mathbb E}
\newcounter{reminder}
\title
[Characterizations of polyhedrons]
{On some characterizations of convex polyhedra}
\author{Sergii Myroshnychenko}
\address{Department of Mathematical and Statistical Sciences, University of Alberta,
	Edmonton, Alberta T6G 2G1 , Canada.} \email{myroshny@ualberta.ca}
\begin{document}
	
	\thanks{The author is supported by PIMS Postdoctoral Fellowship.}
	\begin{abstract}
This work provides two sufficient conditions in terms of sections or projections for a convex body to be a polytope. These conditions are necessary as well.
	\end{abstract}
	
	\maketitle
\section{Introduction}

Many curious properties of convex bodies can be determined by two dual notions: projections and sections. This paper contains characterizations of polytopes in terms of non-central sections as well as point projections (see Figure \ref{fig:mainfig}). 

\begin{theorem}\label{th2}
Let $K$ be a convex body in $\E^d, d \geq 3$, and $\{H_{\alpha}\}_{\alpha \in \mathcal{A}}$ be a set of $k$-dim affine spaces, $2 \leq k \leq d-1$, all of which intersect the interior of $K$, such that:
\begin{itemize}
	\item for any supporting line $l$ of $K$, there exists a plane $H_{\alpha} \supset l$;
	\item for all $\alpha \in \mathcal{A}$, the intersection $K \cap H_{\alpha}$ is a $k$-dim polytope.
\end{itemize}
Then $K$ is a polytope.
\end{theorem}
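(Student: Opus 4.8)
The plan is to show that $K$ has only finitely many extreme points, which by the Krein--Milman/Minkowski theorem forces $K=\mathrm{conv}(\mathrm{ext}\,K)$ to be a polytope; equivalently, one shows that $\partial K$ lies in finitely many hyperplanes. Since each $H_\alpha$ meets $\mathrm{int}\,K$, the section $P_\alpha:=K\cap H_\alpha$ is a $k$-dimensional polytope whose relative boundary is exactly $\partial K\cap H_\alpha$, and any supporting line $l\subset H_\alpha$ of $K$ is a supporting line of $P_\alpha$ inside $H_\alpha$; hence $l\cap K=l\cap P_\alpha$ is a face of $P_\alpha$, i.e.\ a vertex or a subsegment of an edge.

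First I would record the basic local consequence of the hypotheses. If $x\in\partial K$ is an exposed point, with exposing hyperplane $\Gamma$ (so $\Gamma\cap K=\{x\}$), then every line $l\subset\Gamma$ through $x$ is a supporting line of $K$ meeting $K$ only at $x$. Choosing such an $l$ and the plane $H_\alpha\supset l$ provided by the first hypothesis, the point $x$ is a supporting-line contact of the polytope $P_\alpha$ consisting of a single point, hence a \emph{vertex} of $P_\alpha$. Consequently the edges of $P_\alpha$ emanating from $x$ are nondegenerate segments lying in $\partial K$: every exposed point of $K$ is the endpoint of at least $k$ boundary segments, and the tangent cone of $K$ at $x$ contains a pointed $k$-dimensional polyhedral corner. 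In particular $K$ has no smooth, strictly convex boundary points, which already excludes bodies such as balls and cylinders.

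The engine for turning this local flatness into global finiteness is the following observation: if a $2$-plane $\Pi$ meeting $\mathrm{int}\,K$ contains two \emph{distinct} supporting lines of $K$ lying in a common $H_\alpha$, then those two lines span $\Pi$ affinely, so $\Pi\subset H_\alpha$ and $K\cap\Pi=P_\alpha\cap\Pi$ is a polygon. Thus whenever two independent flat directions at a boundary point are captured by the same section, a genuine $2$-dimensional polygonal slice appears there; this is the tool I would use to derive a contradiction from any accumulation of extreme points.

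The main obstacle is to convert this local data into a global bound, since $\{H_\alpha\}$ is not assumed closed and limits of the sections need not belong to it. I would argue by contradiction: if $\mathrm{ext}\,K$ is infinite then, by Straszewicz's theorem, exposed points are infinite as well, and one can extract distinct exposed points $x_n\to x_*$, each a vertex of some $P_{\alpha_n}$ with at least $k$ emanating boundary segments. Passing to a subsequence along which the edge directions converge, the crux is to guarantee a \emph{uniform} lower bound on the lengths of these segments; with such control their limits yield two distinct supporting lines through $x_*$ inside one limiting section, and the spanning observation above then forces a $2$-dimensional polygonal section through $x_*$, contradicting the accumulation of polyhedral corners there. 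Securing this uniformity---equivalently, proving that $K$ is locally polyhedral of bounded complexity at each boundary point---is where the real work lies, and I expect it to hinge on a compactness argument over the affine Grassmannian of the planes $H_\alpha$ together with the bounded face complexity of each polytope $P_\alpha$. An alternative finish, once enough $2$-sections are known to be polygons, is to appeal to the criterion that a convex body whose planar sections through a fixed interior point are all polygons with uniformly bounded edge count is necessarily a polytope.
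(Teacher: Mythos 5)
Your overall strategy---argue by contradiction that extreme points of $K$ cannot accumulate---is the same as the paper's, but the proposal leaves the decisive step unproved. You yourself flag it: the ``uniform lower bound on the lengths of these segments'' at the exposed points $x_n\to x_*$ is exactly ``where the real work lies,'' and the compactness argument you hope for over the affine Grassmannian does not obviously exist, precisely because (as you also note) the family $\{H_\alpha\}$ is not assumed closed, so limits of the sections $P_{\alpha_n}$ need not be sections from the family, and the edge lengths of $P_{\alpha_n}$ at $x_n$ can a priori shrink to zero. Likewise, your ``engine'' requires two limiting supporting lines to lie in a \emph{common} $H_\alpha$, and nothing in the construction guarantees this. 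So as written the proof has a genuine gap at its core; everything before it (exposed points are vertices of their sections, with at least $k$ nondegenerate boundary edges emanating) is correct but is only the easy part.

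The paper closes this gap without any uniformity over the boundary, by working at the single accumulation point. Given extreme points $q_n\to p$ with normalized directions $(q_n-p)/\|q_n-p\|\to u$, the ray $l$ from $p$ in direction $u$ is a supporting ray, so the hypothesis supplies \emph{one} plane $H_\alpha\supset l$; since $K\cap H_\alpha$ is a polytope, there is a nondegenerate edge $[p\,q]\subset\partial K$ at $p$. A local argument---writing $\partial K$ near $p$ as a graph of a convex, hence locally Lipschitz, function over a suitable hyperplane containing $l$---forces the angle between $l$ and $[p\,q]$ to be zero, i.e.\ the accumulation direction $u$ \emph{is} an edge direction of that one section. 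Then a separate elementary step (the ``diamond'' lemma: if a face $Q\subset\partial K$ of a transversal section meets the relative interior of $[p\,q]$, then $conv\{Q,[p\,q]\}\subset\partial K$, applied to the facets of a section through the midpoint of $[p\,q]$) shows that no extreme points of $K$ can lie in a small cone around $[p\,q]$ near $p$, contradicting $q_n\to p$. If you want to rescue your approach, this localization---trading a global uniform bound for a Lipschitz estimate at the single limit point plus the diamond lemma---is the missing idea; the appeal to Straszewicz and the uniform-edge-count variant of Klee's criterion are not needed.
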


\begin{figure}[h!]
	\centering
	\includegraphics[scale=0.25]{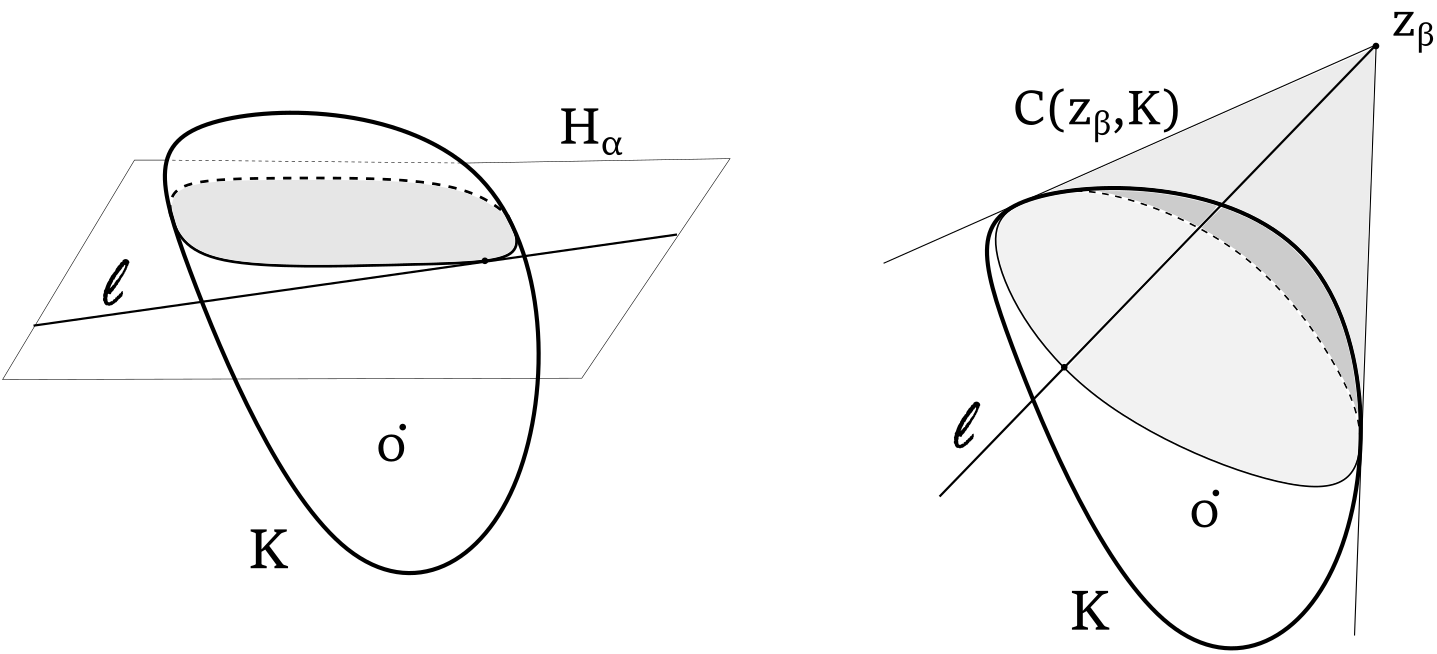}
	\captionsetup{justification=centering}
	\caption{A $(d-1)$-dim section of $K$ by hyperplane $H_{\alpha}$, and a $d$-dim visual cone $C(z_{\beta},K)$ of $K$.}
	\label{fig:mainfig}
\end{figure}

For instance, if $k=d-1$ and $\delta$ is a continuous function on $S^{n-1}$, then set of hyperplanes $\big\{H_{\xi}\big \}_{\xi}$ intersecting the interior of $K$,
\[
H_{\xi} = \big \{x \in \E^d: \, x \cdot \xi = \delta(\xi) \big \}, \quad \xi \in S^{d-1},
\]
satisfies the conditions of the theorem for sufficiently small $\delta$ (see \cite{BG}). In particular, for $\delta \equiv 0$, Theorem~\ref{th2} implies the celebrated result of Victor Klee from 1959 
\begin{theoremK}[\cite{K}] \label{K1}
	A bounded convex subset of $\E^d$ is a polytope if any of its $k$-dim central sections, $2 \leq k \leq d-1$, is a polytope.
\end{theoremK}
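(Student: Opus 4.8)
My plan is to obtain Theorem~\ref{K1} as an immediate consequence of Theorem~\ref{th2}, by exhibiting a family of central $k$-flats that satisfies both hypotheses of the latter. The entire content will sit in Theorem~\ref{th2}; the work is only to package the central sections into an admissible family and to check the supporting-line condition.

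First I fix an interior point $O\in\mathrm{int}\,K$ (we may assume $K$ is $d$-dimensional, replacing $\E^d$ by $\mathrm{aff}\,K$ otherwise, which either trivializes the statement or reduces its dimension), and I let $\{H_\alpha\}_{\alpha\in\mathcal A}$ be the collection of \emph{all} $k$-dimensional affine subspaces containing $O$. Every such $H_\alpha$ meets $\mathrm{int}\,K$, since $O\in H_\alpha\cap\mathrm{int}\,K$, and by hypothesis each central section $K\cap H_\alpha$ is a $k$-dimensional polytope; this is exactly the second bullet of Theorem~\ref{th2}, and it also forces the sections to be genuinely $k$-dimensional, as they contain the interior point $O$.

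The only point requiring an argument is the first bullet, the covering of supporting lines. Let $l$ be a supporting line of $K$, so $l\cap K\neq\emptyset$ while $l\cap\mathrm{int}\,K=\emptyset$. Since $O\in\mathrm{int}\,K$, we have $O\notin l$, whence $\mathrm{aff}(l\cup\{O\})$ is a $2$-dimensional plane through $O$. Because $2\le k\le d-1$, this plane is contained in some $k$-dimensional subspace through $O$, that is, in some $H_\alpha\supset l$. Both hypotheses of Theorem~\ref{th2} now hold, and we conclude that $K$ is a polytope. The converse (a polytope has polytopal central sections) is immediate, giving the stated characterization.

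I do not expect a genuine obstacle in this reduction: the only structural input is the inequality $k\ge 2$, which is precisely what lets a flat through $O$ absorb an arbitrary supporting line via the $2$-plane $\mathrm{aff}(l\cup\{O\})$. The mild points to keep honest are the degenerate cases where $K$ fails to be full-dimensional, handled by passing to $\mathrm{aff}\,K$, and the verification that sections through $O$ are automatically $k$-dimensional, handled above; neither affects the argument.
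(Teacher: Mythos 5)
Your reduction is correct and is essentially the paper's own derivation: the paper obtains Theorem~\ref{K1} from Theorem~\ref{th2} by taking the family of central flats (the case $\delta\equiv 0$), and your verification of the supporting-line condition via the $2$-plane spanned by $l$ and an interior point $O$ is exactly the content implicit in that remark. Nothing further is needed.
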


The version of Theorem \ref{th2} for ellipsoids was handled in \cite{BG}. It also implies the corresponding result for Euclidean balls. We note that such settings are considered in several problems of Convex Geometry, such as questions related to characterizations of balls by sections and caps (\cite{KO}), conical sections (\cite{RY}), floating bodies (\cite{B}, \cite{BSW}),  $t$-sections (\cite{Y}, \cite{YZ}). Additionally, we also provide the following dual result for visual cones  (see definition \ref{sightCone})
\begin{theorem}\label{th}
	Let $K$ be a convex body in $\E^d, d \geq 3$, and $\{z_{\beta}\}_{\beta \in \mathcal{B}} \subset \E^d$ be a set of exterior points of $K$ that satisfies:
	\begin{itemize}
		\item for any supporting line $l$ of $K$, there exists a point $z_\beta \in l$;
		\item for a fixed $k$, $3 \leq k \leq d$, and all $\beta \in \mathcal{B}$, any $k$-dim visual cone $C_k(z_{\beta}, K)$ is polyhedral.
	\end{itemize}
	Then $K$ is a polytope.
\end{theorem}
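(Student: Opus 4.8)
The plan is to deduce Theorem~\ref{th} from Theorem~\ref{th2} by polar duality. Fix a point $o \in \mathrm{int}\,K$ as the origin and pass to the polar body $K^{\circ} = \{y : \langle x,y\rangle \le 1 \text{ for all } x \in K\}$, which is again a convex body with $o$ in its interior and satisfies $(K^{\circ})^{\circ} = K$. Since the polar of a polytope is a polytope, it suffices to show that $K^{\circ}$ is a polytope, and for this I will verify the hypotheses of Theorem~\ref{th2} for $K^{\circ}$ with section dimension $k-1$ (note $2 \le k-1 \le d-1$, which matches the admissible range there). Necessity of the conditions is immediate, since sections and visual cones of a polytope are again polytopes and polyhedral cones.

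The heart of the argument is a duality dictionary. First, for $z \notin K$ the polar hyperplane $z^{\ast} = \{y : \langle z,y\rangle = 1\}$ meets $\mathrm{int}\,K^{\circ}$, and the supporting hyperplanes of $K$ through $z$ are exactly $\{x : \langle x,u\rangle = 1\}$ with $u \in \partial K^{\circ}\cap z^{\ast} = \partial_{\mathrm{rel}}(K^{\circ}\cap z^{\ast})$. Writing the visual cone as the intersection of the supporting halfspaces of $K$ through $z$, the radial map $w \mapsto w/h_K(w)$ identifies the extreme rays and facets of $C(z,K)$ with the vertices and facets of the section $K^{\circ}\cap z^{\ast}$; by Minkowski--Weyl this yields the core statement: $C(z,K)$ is polyhedral if and only if $K^{\circ}\cap z^{\ast}$ is a polytope. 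The $k$-dimensional refinement is obtained by projecting: for a $(d-k)$-flat $A \ni z$ missing $\mathrm{int}\,K$, the image $\pi_A(K)$ under the quotient $\pi_A$ along $\mathrm{dir}(A)$ is a $k$-dimensional body with $\pi_A(z)$ exterior, and, via the projection--section polarity $(\pi_A K)^{\circ} = K^{\circ}\cap \mathrm{dir}(A)^{\perp}$, applying the core statement inside the $k$-dimensional space $\mathrm{dir}(A)^{\perp}$ shows that the corresponding $k$-dimensional visual cone is polyhedral if and only if a $(k-1)$-dimensional section $K^{\circ}\cap G$ is a polytope. Thus the assumption that all $k$-dimensional visual cones from the points $z_\beta$ are polyhedral produces a family $\{G_\gamma\}$ of $(k-1)$-flats, each meeting $\mathrm{int}\,K^{\circ}$, with every $K^{\circ}\cap G_\gamma$ a polytope.

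It remains to check the covering condition, namely that every supporting line $l'$ of $K^{\circ}$ lies in some $G_\gamma$. Such an $l'$ is contained in a supporting hyperplane $\{u : \langle p,u\rangle = 1\}$ of $K^{\circ}$ for some $p \in \partial K$, and each $u \in l'\cap K^{\circ}$ corresponds to a supporting hyperplane of $K$ at $p$. Writing $l' = u_0 + \mathbb{R}v$, all these supporting hyperplanes contain the common $(d-2)$-flat $A' = \{x : \langle x,u_0\rangle = 1,\ \langle x,v\rangle = 0\}$, which passes through $p$, lies in the supporting hyperplane $\{x : \langle x,u_0\rangle = 1\}$, and is therefore a supporting flat of $K$. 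Any line in $A'$ is consequently a supporting line of $K$, so by the first hypothesis it carries an exterior point $z_\beta$; since $z_\beta \in A'$ forces $\langle z_\beta,u_0\rangle = 1$ and $\langle z_\beta,v\rangle = 0$, every $u \in l'$ satisfies $\langle z_\beta,u\rangle = 1$, that is, $l' \subset z_\beta^{\ast}$. When $k=d$ we may take $G_\gamma = z_\beta^{\ast}$ and conclude at once; for $k<d$ one selects the projection flat $A \ni z_\beta$ compatibly with $A'$ so that the associated $(k-1)$-flat $G$ still contains $l'$, giving $l' \subset G_\gamma$.

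With the three hypotheses of Theorem~\ref{th2} verified for $K^{\circ}$ at section dimension $k-1$, we conclude that $K^{\circ}$ is a polytope, whence $K = (K^{\circ})^{\circ}$ is a polytope. I expect the main obstacle to lie in the range $k<d$: one must make the correspondence between $k$-dimensional visual cones and $(k-1)$-dimensional sections of $K^{\circ}$ fully precise through the projection--section polarity, and, above all, choose the projection flats $A$ so that the resulting family $\{G_\gamma\}$ genuinely covers every supporting line of $K^{\circ}$. The covering step easily delivers $l' \subset z_\beta^{\ast}$, but upgrading this to containment in a lower-dimensional polytopal section $G_\gamma$ requires care, as does the treatment of non-smooth boundary points and of flats $A$ that meet $\mathrm{int}\,K$.
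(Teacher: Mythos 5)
Your strategy --- reducing Theorem \ref{th} to Theorem \ref{th2} by polarity --- is genuinely different from the paper's, which never dualizes in this direction (polarity is only used there to pass from Theorem \ref{th2} to Theorem \ref{K2}); instead the paper fixes a direction $\xi$, shows that every extreme point of the shadow $K|\xi^{\perp}$ is the footprint of an edge of some visual cone $C(z_\beta,K)$ and hence isolated, concludes that every $2$-dim projection is a polygon, and invokes Theorem \ref{K2}, reducing $3\le k\le d$ to $k=3$ via $3$-dimensional sections. Within your approach, the core duality ($C(z,K)$ is polyhedral iff $K^{\circ}\cap z^{*}$ is a polytope) is correct, and your covering argument is sound and is the genuinely nontrivial step: note that when $l'$ actually touches $K^{\circ}$ one automatically has $A'\cap int\,K=\emptyset$ (a point of $A'\cap int\,K$ would force $l'\cap K^{\circ}=\emptyset$), so every line of $A'$ meeting $K$ is a supporting line of $K$ and the first hypothesis supplies $z_{\beta}\in A'$, whence $l'\subset z_{\beta}^{*}$. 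For $k=d$ this yields a complete proof, applying Theorem \ref{th2} to $K^{\circ}$ with the family $\{z_{\beta}^{*}\}$ of $(d-1)$-dim sections.

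The genuine gap is your dictionary for $3\le k\le d-1$, precisely where you flag the difficulty. By the paper's definition, the $k$-dim visual cone is a \emph{section} $C_k(z,K)=C(z,K)\cap H$ of the full cone by a $k$-flat $H\ni z$ with $H\cap int\,K\neq\emptyset$ (equivalently, the visual cone of $K\cap H$ inside $H$); it is \emph{not} the visual cone of a projection $\pi_A(K)$, which is what your $(d-k)$-flat construction produces --- that object is $\pi_A\big(C(z,K)\big)$, a projection of the cone. Under polarity, sections of $C(z,K)$ through the apex correspond to \emph{projections} of $K^{\circ}\cap z^{*}$, not to $(k-1)$-dim sections of $K^{\circ}$, so the hypothesis of Theorem \ref{th} does not hand you a family $\{G_{\gamma}\}$ of $(k-1)$-flats with $K^{\circ}\cap G_{\gamma}$ a polytope, and the plan of running Theorem \ref{th2} at section dimension $k-1$ does not get off the ground. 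The natural repair is to first upgrade the hypothesis to ``every full cone $C(z_{\beta},K)$ is polyhedral'' via Mirkil's Lemma \ref{polycones} and then apply your $k=d$ argument; but Mirkil requires $C(z_{\beta},K)\cap H$ to be polyhedral for \emph{all} $k$-dim flats through $z_{\beta}$, whereas the hypothesis only covers those meeting $int\,K$, so the flats whose section lies entirely in $\partial C(z_{\beta},K)$ must be treated separately. The paper avoids this entirely by only ever using the easy direction (sections of a polyhedral cone are polyhedral) in its reduction to $k=3$.
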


For example, a closed surface $S$ containing $K$ in its interior satisfies the conditions of the point-set in Theorem \ref{th}. Also, when $S$ is a convex surface, the analogous problem for circular cones was solved in \cite{M}.
In the class of elliptical cones, where $\{z_{\beta}\}_{\beta \in \mathcal{B}} $ is a closed set, the corresponding result was obtained for ellipsoids in \cite{BG}. 
Two pertinent characterizations of ellipsoids regarding visual cones were considered in \cite{GO}. 
Questions related to measures of visual cones rather than shapes were also studied in \cite{Ku} and \cite{KO}.
A resembling construction for illumination bodies is discussed in \cite{MW};
for the related well-known illumination problem see \cite{Bo} and \cite{BH}. A question regarding the visual recognition of polytopes was also investigated in \cite{My}.

Lastly, by polar duality (\cite{Ga}, p. 22), Theorem \ref{th2} allows one to extend Theorem \ref{th} to the case of infinitely distant points. This way, we obtain Klee's Theorem for orthogonal projections
\begin{theoremK}[\cite{K}]\label{K2}
	A bounded convex subset of $\E^d$ is a polytope if any of its $k$-dim orthogonal projections,  $2 \leq k \leq d-1$, is a polytope.
\end{theoremK}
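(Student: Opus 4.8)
The plan is to deduce this projection statement from the central-section statement already in hand (Klee's Theorem \ref{K1}, itself the $\delta\equiv 0$ specialization of Theorem \ref{th2}) by means of polar duality, exactly as indicated in the sentence preceding the theorem. Throughout I may assume that $K$ has nonempty interior, since otherwise one works inside its affine hull where the assertion is of lower dimension, and that, after a translation, the origin lies in the interior of $K$; neither reduction affects whether $K$ or any of its projections is a polytope. Write $K^{\circ}=\{y\in\E^d:\ x\cdot y\le 1\text{ for all }x\in K\}$ for the polar body, which is again a convex body with $0$ in its interior, and recall that $K$ is a polytope if and only if $K^{\circ}$ is one (\cite{Ga}, p.~22).

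The key identity I would invoke is the classical duality exchanging orthogonal projections with central sections: for every linear subspace $E\subset\E^d$,
\[
K^{\circ}\cap E=\big(P_E K\big)^{\circ_E},
\]
where $P_E$ denotes orthogonal projection onto $E$ and $\circ_E$ the polar taken inside the Euclidean space $E$. This is immediate from the definitions, since for $y\in E$ one has $x\cdot y=(P_E x)\cdot y$, so that $x\cdot y\le 1$ for all $x\in K$ is equivalent to $z\cdot y\le 1$ for all $z\in P_E K$. Geometrically this is the limiting case of a visual cone as its viewpoint recedes to infinity, which under polarity is sent to a hyperplane through the origin; this is the sense in which passing to infinitely distant points converts the visual-cone Theorem \ref{th} into a statement about orthogonal projections.

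With this in place the argument is short. Fix $k$ with $2\le k\le d-1$ and let $E$ be any $k$-dimensional linear subspace. By hypothesis the projection $P_E K=K\,|\,E$ is a $k$-dimensional polytope; since the origin lies in its relative interior and the polar of a polytope taken within $E$ is again a polytope, the displayed identity shows that the central section $K^{\circ}\cap E$ is a $k$-dimensional polytope. As $E$ ranges over all $k$-dimensional subspaces, every $k$-dimensional central section of $K^{\circ}$ is a polytope, so Theorem \ref{K1} applied to $K^{\circ}$ yields that $K^{\circ}$ is a polytope, whence so is $K=(K^{\circ})^{\circ}$. Necessity is clear, as every orthogonal projection of a polytope is a polytope. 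The one point I would treat with care is the book-keeping around the reduction to $0\in\operatorname{int}K$, ensuring the origin lands in the relative interior of each $K\,|\,E$ so that $(P_E K)^{\circ_E}$ is a genuine $k$-dimensional polytope rather than an unbounded polyhedron; once the viewpoint is interior this is automatic, and everything else is a formal consequence of the projection–section duality together with Theorem \ref{K1}.
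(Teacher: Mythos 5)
Your argument is correct and is precisely the route the paper indicates: polar duality converts the $k$-dimensional orthogonal projections of $K$ into the $k$-dimensional central sections of $K^{\circ}$ via the identity $K^{\circ}\cap E=(K|E)^{\circ_E}$, after which Theorem~\ref{K1} (the $\delta\equiv 0$ case of Theorem~\ref{th2}) applied to $K^{\circ}$ finishes the proof. The reductions you flag (full dimensionality, $0\in int\,K$ so that each $(K|E)^{\circ_E}$ is bounded) are exactly the right points to check, and they are handled correctly.
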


The main idea of the proof for Theorems \ref{th2} and \ref{th} is to show that, under the provided conditions, extreme points of $K$ cannot accumulate. To prove Theorem \ref{th2}, we generalize the construction from \cite{Z}. The proof of Theorem \ref{th} relies on a similar idea for orthogonal projections on two-dimensional subspaces.

\section{Preliminary definitions and results }
In this section we provide the results, definitions, and notation for notions used throughout the paper. For more details on these and related concepts of Convex Geometry, see \cite{Ga}, \cite{Gr}, \cite{S}.

By lowercase letters such as $p, q, r$ etc., we denote points in $d$-dim Euclidean space $\E^d$, $d \geq 3$. 
For any two points $p, q$, the closed segment connecting them is denoted by $[p \, q]$, and its interior is $( p\, q)$. Notation $\angle (p \, q\, r)$ stands for the angle between $[p \, q]$ and $[r\, q]$. The Euclidean length of $[p \, q]$ is $\|p -q\|$.
Then, the unit sphere with center at the origin $o$ is $S^{d-1} = \{x \in \E^d: \, \|x \| = 1\}$.

For a set $V \subset \E^k \subseteq \E^d$, $k \leq d$, by $int \, V$ we mean the set of relative interior points of $V$, and $\partial V$ stands for its set of boundary points in $\E^k$. A set $V \in \E^d$ is called \textit{convex} if for any $p, q \in V$, one also has $[p \, q] \subset V$. By a \textit{convex body} we mean a compact convex set with non-empty interior.
A line $l$ is called a \textit{supporting line} of a set $V$ if $l \cap V \subset \partial V$. A \textit{supporting ray} is a half-line of a supporting line that has non-empty intersection with the set.
For a convex set $V$, a point $r\in V$ is called \textit{extreme} if there do not exist two distinct points $p, q \in V$, such that $r \in (p \, q)$. The distance between point $p$ and a set $V$ is
$$
dist \, (p, V) = \inf  \limits_{z \in V} \|z-p\|.
$$

The\textit{ convex hull }of a finite number of sets $V_1, \ldots, V_m  \subset \E^d$ is
\[
conv \, \Big\{V_1, \ldots, V_m \Big \} = \Big\{\lambda_1 p_1 + \ldots + \lambda_m p_m: \, \sum \limits_{j=1}^m \lambda_j = 1,\, \lambda_j \geq 0, \, p_j \in V_j\Big\}.
\]
The convex hull of a finite number of points is called \textit{a (convex) polytope}. In this regard, Minkowski was first to show
\begin{proposition*}[\cite{Gr}, p.75]
	Every convex set in $\E^d$ is the convex hull of its extreme points.
\end{proposition*}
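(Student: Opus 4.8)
The containment $conv(\mathrm{ext}\,K)\subseteq K$ is immediate, since $K$ is convex and contains each of its extreme points. The plan is to establish the reverse containment $K\subseteq conv(\mathrm{ext}\,K)$ by induction on the dimension of $K$, reading the statement in its intended (Minkowski) form, namely for a \emph{compact} convex set $K$ -- equivalently a convex body inside its own affine hull; this is the only regime in which the proposition is applied later. The base cases $\dim K=0$ and $\dim K=1$ are clear: a point is its own unique extreme point, and a segment $[p\,q]$ is the convex hull of its endpoints $p,q$, which are precisely its extreme points.

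For the inductive step, I assume the claim for all compact convex sets of dimension at most $n-1$ and take $\dim K=n$. The first move is to reduce an arbitrary point of $K$ to boundary points. Given $x\in K$, if $x\in\partial K$ it is treated below; if $x\in int\,K$, then choosing any line $l$ through $x$ and using compactness of $K$, the intersection $l\cap K$ is a segment $[x_1\,x_2]$ with $x_1,x_2\in\partial K$ and $x\in[x_1\,x_2]$. Hence it suffices to show that every boundary point of $K$ lies in $conv(\mathrm{ext}\,K)$, for then $x$, being a convex combination of two such points, also lies there.

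So let $x\in\partial K$. By convexity there is a supporting hyperplane $H$ of $K$ (within $\mathrm{aff}\,K$) passing through $x$, and the face $F=K\cap H$ is a nonempty compact convex set with $\dim F\le n-1$. The inductive hypothesis gives $x\in conv(\mathrm{ext}\,F)$. The decisive point is then the lemma that every extreme point of the face $F=K\cap H$ is already extreme in $K$: writing $H=\{y:\,y\cdot\xi=c\}$ with $K$ in the halfspace $y\cdot\xi\le c$, if some $p\in\mathrm{ext}\,F$ satisfied $p\in(u\,v)$ for distinct $u,v\in K$, then $u\cdot\xi\le c$, $v\cdot\xi\le c$, while $p\cdot\xi=c$ is a strict convex combination of these values, forcing $u\cdot\xi=v\cdot\xi=c$, i.e. $u,v\in F$, contradicting $p\in\mathrm{ext}\,F$. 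Thus $\mathrm{ext}\,F\subseteq\mathrm{ext}\,K$, so $x\in conv(\mathrm{ext}\,K)$, and the induction closes.

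The main obstacle, and the only genuinely convex-geometric input, is the supporting-hyperplane step: one must invoke the existence of a supporting hyperplane at each boundary point of a compact convex set and verify that the resulting face drops strictly in dimension, so that the induction is well founded. Both are standard, yet they are exactly the facts that break down without compactness -- an open ball, for instance, has no extreme points at all -- which is why the statement is to be understood for compact $K$.
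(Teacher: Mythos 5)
Your proof is correct. The paper does not actually prove this statement---it is quoted from Gr\"uber's book---and your induction on dimension via supporting hyperplanes, together with the lemma that extreme points of a face $F=K\cap H$ are extreme in $K$, is precisely the standard argument found in that reference; you are also right to flag that ``compact'' must be read into the statement (an open ball has no extreme points), which is the regime in which the paper uses it.
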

In particular, it implies that a polytope is the convex hull of the finite set of its extreme points that are called \textit{vertices}.

By \textit{polyhedral cone }$C$ with \textit{apex} at the origin, we understand a non-empty intersection of a finite family of closed half-spaces. 
By a translation, we extend this notion to a cone with the apex at any point $z \in \E^d$.

The natural notion of a full-dimensional \textit{visual} (also called \textit{sight} in \cite{M})  \textit{cone} of a set (see Figure \ref{fig:mainfig} and Theorem \ref{th}), $V \subset \E^d$ with apex at $z \not \in V$ is
\begin{equation}\label{sightCone}
C(z,V) := \Big \{z + t \cdot (x-z):\, x \in V, \, t \geq 0\Big \}.
\end{equation}

For a set $V$ with non-empty relative interior, consider a $k$-dim affine subspace $H$, $2 \leq k \leq d-1$, such that $z \in H$ and $H \cap int \, V \neq  \emptyset$. Then, a $k$\textit{-dim visual cone} with the apex at $z$ is a sub-cone of $C(z,V)$,
$$
C_k(z, V) = C(z,V) \cap H.
$$
In this regard, observe the following result of Mirkil
\begin{lemma}[\cite{Mi}]\label{polycones}
	A cone $C \subset \E^d, d > 3,$ with the apex at the origin is polyhedral if and only if, for any $k$-dim subspace $H$, $3 \leq k \leq d-1$, 
	the cone $C_H = C \cap H$ is either empty or
	polyhedral.
\end{lemma}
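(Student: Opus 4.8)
The plan is to prove the two implications separately; the forward direction is routine and the converse carries all the content. For necessity I would argue directly: if $C$ is polyhedral then, its apex being the origin, it admits a homogeneous representation $C=\bigcap_{i=1}^{m}\{x\in\E^d:\ \langle a_i,x\rangle\le 0\}$ with finitely many $a_i$, and for any subspace $H$ one has $C\cap H=\bigcap_{i=1}^{m}\bigl(H\cap\{x:\ \langle a_i,x\rangle\le 0\}\bigr)$. Each factor is a half-space of $H$ (or all of $H$), so $C\cap H$ is polyhedral whenever it is nonempty. This settles the ``only if'' direction for every $k$ at once.

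The substance is the converse, and here I would first reduce to a compact model. After restricting to the linear hull of $C$ and factoring out its lineality space $C\cap(-C)$ (checking that subspace sections descend correctly under these operations), I may assume $C$ is full-dimensional and pointed. Then $C$ is the cone over a compact convex \emph{base} $B$, obtained by intersecting $C$ with a suitable affine hyperplane $A$ transversal to it, and $C$ is polyhedral if and only if $B$ is a polytope. Under this correspondence a $k$-dimensional subspace $H$ through $o$ meets $A$ in a $(k-1)$-dimensional flat $F=H\cap A$, with $C\cap H$ equal to the cone over $B\cap F$; conversely each $(k-1)$-flat $F\subset A$ arises from $H=\mathrm{span}(\{o\}\cup F)$. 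Thus the hypothesis becomes: every affine section of $B$ of dimension $j$, $2\le j\le d-2$, is a polytope, and the goal becomes that $B$ itself is a polytope. This is where $k\ge 3$ enters: $2$-dimensional cone sections, i.e. $1$-dimensional sections of $B$, are always segments and carry no information, whereas $3$-dimensional sections are the first that can detect non-polytopal behaviour.

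By Minkowski's theorem (the Proposition above) $B$ is a polytope exactly when it has finitely many extreme points, so I would argue by contradiction and suppose the extreme points of $B$ are infinite; by compactness they accumulate at some $p\in\partial B$. The elementary fact I would lean on is that an extreme point of $B$ lying in an affine flat $F$ remains extreme in $B\cap F$; hence it would suffice to exhibit a flat of dimension $j$, $2\le j\le d-2$, containing infinitely many extreme points, for then $B\cap F$ has infinitely many extreme points and cannot be a polytope, contradicting the hypothesis. When no single flat captures infinitely many of the accumulating extreme points, I would instead induct on $\dim B$: choosing an affine hyperplane $H$ through $p$ meeting the interior of $B$, the section $B\cap H$ is a polytope by hypothesis, while a limiting argument at $p$ — comparing the polytope sections $B\cap H_t$ for hyperplanes $H_t$ tending to a supporting hyperplane at $p$, whose vertex counts are forced to grow — should push the accumulation into a lower-dimensional section and invoke the inductive conclusion. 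The base case $\dim B=3$, namely that every planar section of a $3$-dimensional body being a polygon forces the body to be a polytope, must be handled by hand.

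The hard part will be exactly this last construction. The difficulty is that the accumulating extreme points of $B$ need not be coplanar, and that enlarging a section does not in general preserve extremeness, so one cannot simply slice through the accumulation and read off a non-polytope. Overcoming this requires choosing the cutting subspace in a way adapted to the local geometry of $\partial B$ at $p$ — essentially the construction generalized in \cite{Z} — and it is the crux on which the whole converse rests.
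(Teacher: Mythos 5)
First, a point of reference: the paper gives no proof of Lemma~\ref{polycones} --- it is quoted from Mirkil \cite{Mi} --- so your proposal has to be judged on its own terms. Your necessity argument is correct. Your reduction of the sufficiency direction is also correct and is a genuinely good move: after passing to a compact base $B$ of the (pointed, full-dimensional) cone, the hypothesis becomes ``every $j$-dimensional affine section of $B$, $2\le j\le d-2$, is a polytope'' and the goal becomes ``$B$ is a polytope''. But observe that at this point you have reduced Mirkil's lemma precisely to Klee's section theorem (Theorem~\ref{K1} of the paper, applied in the $(d-1)$-dimensional hyperplane carrying $B$, e.g.\ with $2$-dimensional sections through a fixed interior point). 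That theorem is the entire content of the statement, and the paper records it as known; you could legitimately finish by citing it. Instead you set out to reprove it and stop at the crux.

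The gap is concrete. Your contradiction argument needs either (a) a single flat of dimension at most $d-2$ containing infinitely many extreme points of $B$ --- which, as you yourself note, need not exist --- or (b) the limiting argument with hyperplanes $H_t$ tending to a support hyperplane at the accumulation point $p$, where the key assertion ``vertex counts are forced to grow'' is never justified. Nothing in the sketch transfers the accumulation of extreme points of $B$ into any one section: the vertices of $B\cap H_t$ are in general not extreme points of $B$, and a section passing near $p$ can perfectly well be a triangle no matter how wildly the extreme points of $B$ accumulate at $p$. Making this transfer work is exactly where all the difficulty of Klee-type theorems lives (compare the machinery the paper builds for Theorem~\ref{th2}: Lemma~\ref{diamond}, Proposition~\ref{cone}, and the Lipschitz estimate, all of which exist to control accumulating extreme points near a boundary point). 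The base case $\dim B=3$ is likewise announced but not carried out. As written, the sufficiency direction is a correct reduction followed by an unproven theorem; the cleanest repair is to close it with an explicit appeal to Theorem~\ref{K1}.
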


\section{Proof of Theorem \ref{th2}}

To prove Theorem \ref{th2}, we need a couple of auxiliary results. The case $d=3$ of the following considerations for central sections was shown in \cite{Z}. Here we provide their generalizations for non-central cases in $d \geq 3$.

\begin{lemma}\label{diamond}
	Let $K \subset \E^d$ be a convex body and $Q$ be a $k$-dim convex set, $Q \subset \partial K$, $1 \leq k \leq d-2$. Let $p,q \in K$ be two distinct points, such that $[p \,q] \cap Q = \{x\} \in (p \, q)$, then
	$$
	Q^{pq} = conv \, \Big\{Q, [p \, q]\Big\} \subset \partial K.
	$$
\end{lemma}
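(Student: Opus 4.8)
The plan is to reduce the statement to a family of triangles and then, for each of them, to produce a \emph{single} supporting hyperplane of $K$ passing through all of its vertices. First I would record the set-theoretic identity
\[
Q^{pq} = \bigcup_{y \in Q} conv\,\{p,q,y\},
\]
which holds precisely because $Q$ is convex: a generic point of $conv\,\{Q,[p\,q]\}$ is a convex combination $\gamma \bar y + \alpha p + \beta q$ with $\gamma+\alpha+\beta = 1$ and $\bar y \in Q$, obtained by collapsing the part of the combination supported on $Q$ into one point $\bar y \in Q$. Hence it suffices to show that for every $y \in Q$ the triangle $conv\,\{p,q,y\}$ lies in $\partial K$; the degenerate cases $y=x$ or $p,q,y$ collinear are harmless, since they only produce segments.

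The technical heart is the following elementary observation, which I intend to use twice. \emph{If $m \in \partial K$ lies in the open segment $(a\,b)$ for some $a,b \in K$, then every supporting hyperplane $H$ of $K$ at $m$ contains both $a$ and $b$.} Indeed, writing $H = \{z : z\cdot u = c\}$ with $K \subset \{z: z\cdot u \le c\}$ and $m = \lambda a + (1-\lambda) b$, $\lambda \in (0,1)$, the inequalities $a\cdot u \le c$ and $b \cdot u \le c$ together with $m \cdot u = c$ force $a\cdot u = b \cdot u = c$, so $a,b \in H$ and $[a\,b] \subset H \cap K \subset \partial K$.

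Now fix $y \in Q$ with $y \neq x$. Since $x, y \in Q$ and $Q$ is convex, the whole segment $[x\,y]$ lies in $Q \subset \partial K$; in particular its midpoint $m$ is a boundary point, so it admits a supporting hyperplane $H$. Applying the observation to $[x\,y] \ni m$ shows $x, y \in H$, so $H$ passes through $x$ and is therefore also a supporting hyperplane of $K$ \emph{at $x$}. Applying the observation a second time, now to the segment $[p\,q]$ whose open part contains $x$ (here $H$ is already known to support $K$ at $x$), shows $p, q \in H$. Thus $p,q,x,y$ all lie on the single supporting hyperplane $H$, whence $conv\,\{p,q,y\} \subset H \cap K \subset \partial K$. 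Taking the union over $y \in Q$ then yields $Q^{pq} \subset \partial K$.

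The step I expect to demand the most care is producing one supporting hyperplane through all four points $p,q,x,y$ at once: a priori $x$ may carry many supporting hyperplanes, and an arbitrary one need not contain $y$. The resolution is the ``double support'' device above — selecting the hyperplane at an interior point of $[x\,y]$ first, which forces it through $x$ (hence into the family of supports at $x$, hence through $p$ and $q$), rather than choosing a support at $x$ and hoping it also captures $y$. Note that this argument uses only the convexity of $Q$ and the membership $Q \subset \partial K$, so the dimensional restriction $1 \le k \le d-2$ is not needed here and is presumably reserved for the subsequent steps.
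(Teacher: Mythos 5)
Your proof is correct, but it takes a genuinely different route from the paper's. The paper argues by contradiction: it supposes some $r \in Q^{pq}$ lies in $int \, K$, erects a small segment $[t\,s] \subset int\,K$ through $r$ perpendicular to $Q^{pq}$, and observes that $int\,Q \subset int\,Q^{pq} \subset int\,\big(conv\,\{Q^{pq},t,s\}\big) \subset int\,K$, contradicting $Q \subset \partial K$; the hypothesis $k \le d-2$ is what guarantees a direction transversal to $Q^{pq}$ exists. You instead give a direct argument: decompose $Q^{pq}$ into the triangles $conv\,\{p,q,y\}$, $y \in Q$, and trap each triangle inside a single supporting hyperplane by the ``double support'' device --- supporting $K$ at the midpoint of $[x\,y]$ forces the hyperplane through $x$ and $y$, and then, being a support at $x \in (p\,q)$, through $p$ and $q$ as well. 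Your version buys a positive identification of where $Q^{pq}$ sits (inside one support plane per triangle), dispenses with the relative-interior bookkeeping, and, as you correctly note, does not use $k \le d-2$ at all, so it proves a slightly more general statement; the paper's version is shorter to state but leans on the codimension assumption and on the inclusion $int\,Q \subset int\,Q^{pq}$, which itself requires a moment's justification. One small point to tighten: in the degenerate case $y = x$ the triangle collapses to $[p\,q]$, and the containment $[p\,q] \subset \partial K$ is not ``automatically harmless'' --- it is supplied by running your argument for any other $y_0 \in Q$ (such $y_0$ exists since $k \ge 1$), whose triangle already contains $[p\,q]$ as an edge; it would be worth saying this explicitly.
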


\begin{figure}[h!]
	\centering
	\includegraphics[scale=0.2]{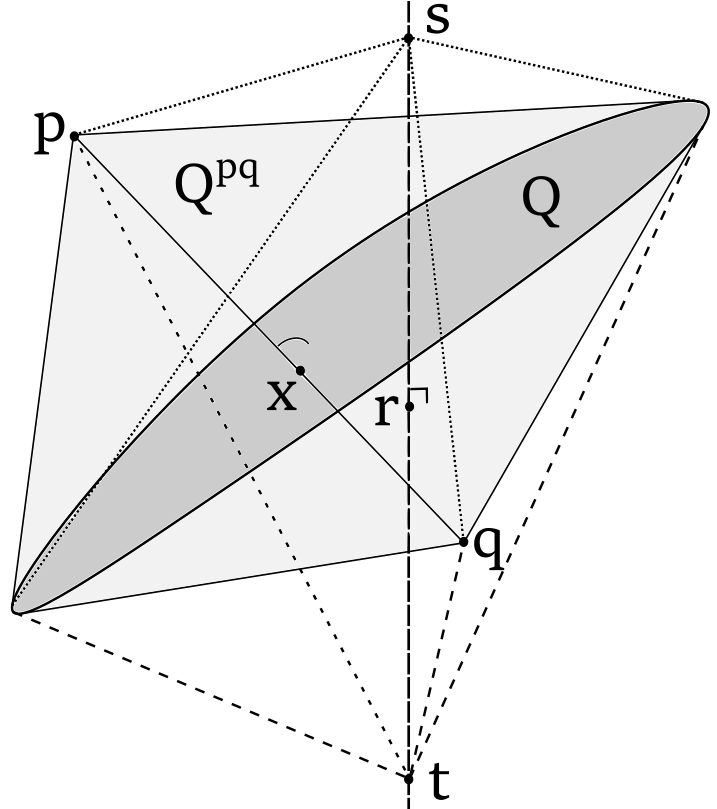}
	\label{fig:diamond}
\end{figure}

\begin{proof}
	
	Assume the opposite, that there exists a point $r \in Q^{pq}$, such that $r \in int \, K$. Then, there exists a line segment $[t\, s] \subset int \, K$, such that $r \in (t \, s)$ and $[t \, s]  \perp Q^{pq}$. This follows from the observation that $r \in int \, K$ is contained in a ball of small enough radius contained in $int \, K$ completely, so $t,s \not \in Q$ can be chosen from its boundary sphere. Hence,
	$$
int \, Q \subset int \, Q^{pq} \subset int \, \bigg(conv\, \Big\{ Q^{pq},t,s\Big\}\bigg) \subset int\, K.
	$$
	
	However, this contradicts the original assumption $int \, Q \subset Q \subset \partial K$.
\end{proof}

The next proposition shows that the conditions of the theorem prevent extreme points of $K$ from ``concentrating along'' a segment in $K$.

\begin{proposition}\label{cone}
	Suppose that a convex body $K$ satisfies the conditions of Theorem \ref{th2}. Let $p,q \in K$ be two distinct points, then there exists $\varepsilon = \varepsilon(p,q) > 0$ such that any $y \in K$ is not an extreme point of $K$ if
$$
	 0<\|p - y\| < \varepsilon, \quad \text{and} \quad \angle ypq < \varepsilon.
$$

\end{proposition}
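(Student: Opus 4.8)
The plan is to reduce the statement to a uniform lower bound on how far $K$ extends from $p$ in directions close to that of $q$, and then to exploit the hypothesis through supporting lines together with Lemma \ref{diamond}.

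First I would dispose of the case $p\in int\,K$: then a sufficiently small ball about $p$ lies in $int\,K$, so every $y$ with $\|p-y\|$ small is interior and hence not extreme. So assume $p\in\partial K$, and set $u=(q-p)/\|p-q\|$. For a unit vector $v$ write $\rho(v)=\sup\{t\ge0:\ p+tv\in K\}$ for the exit distance of the ray from $p$; note $\rho(u)\ge\|p-q\|>0$ since $q\in K$. The first key observation is that every point strictly inside such a ray is non-extreme: if the open ray segment enters $int\,K$ it is non-extreme trivially, while if it stays in $\partial K$ then the line $p+\R v$ is a supporting line of $K$ (it lies in a supporting hyperplane at $p$), so by hypothesis there is a $k$-plane $H_v\supset(p+\R v)$ with $K\cap H_v$ a $k$-dim polytope; the chord $(p+\R v)\cap K$ is then a face of this polytope lying in $\partial K$, i.e. a segment, whose relative interior points are non-extreme. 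In either case $y=p+tv$ is non-extreme whenever $0<t<\rho(v)$.

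Consequently it suffices to produce $c>0$ and an angular neighbourhood of $u$ on which $\rho(v)\ge c$: then any $y$ in the stated cone has direction $v_y=(y-p)/\|y-p\|$ near $u$ and $\|p-y\|<c\le\rho(v_y)$, so $y$ is non-extreme. Equivalently — since an extreme $y$ on the ray from $p$ must be its endpoint $p+\rho(v)v$ — I must rule out directions $v_n\to u$ with $\rho(v_n)\to0$, that is, show that \emph{extreme points of $K$ cannot accumulate at $p$ along the direction of $q$}. The direction $u$ itself is already controlled: either $p+tu\in int\,K$ for small $t>0$, in which case a whole neighbourhood of directions points into $int\,K$ and we are done, or $[p\,q]\subset\partial K$, the line through $p$ and $q$ is a supporting line, and the section $P=K\cap H_\alpha$ through it is a polytope in which $[p\,q]$ spans an edge $E\subset\partial K$ of definite length. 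Thus $\rho$ is bounded below along $u$ and, by the finitely-faced structure of the polytope $P$, along all directions near $u$ that lie in $H_\alpha$.

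The remaining, and main, difficulty is to propagate this lower bound to the \emph{transverse} directions near $u$, which the single section $P$ does not see. The plan is to argue by contradiction: given extreme points $y_n\to p$ with directions $v_n\to u$, I would pass to a subsequence along which the transverse directions converge, take a supporting line $l_n$ of $K$ at $y_n$ inside the $2$-plane $aff\{p,q,y_n\}$ (a supporting line of the planar section at $y_n$ supports $K$, since the relative boundary of a section lies in $\partial K$), and observe that $l_n$ converges to the line through $p$ and $q$. Feeding each $l_n$ into the hypothesis yields polytopal sections whose flat faces sit in $\partial K$ near $y_n$; crossing these faces by chords issuing from the interior of the edge $E$ and invoking Lemma \ref{diamond} should glue them, together with $E$, into a flat piece of $\partial K$ containing the $y_n$ in the relative interior of a segment, contradicting extremality. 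Making this gluing uniform — ensuring the flat piece assembled from the varying sections has definite size rather than shrinking with $n$ — is precisely the crux, and the step I expect to require the most care, since it is exactly the non-concentration of extreme points that the proposition asserts.
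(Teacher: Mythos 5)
Your reduction is sound as far as it goes: the interior case, the observation that relative‑interior points of chords are never extreme (which in fact needs no hypothesis on $K$ at all --- it is just the definition of an extreme point), the reformulation as a uniform lower bound on the exit distance $\rho(v)$ for directions $v$ near $u$, and the control of directions lying inside a section plane through the line $pq$ are all correct. But the argument stops exactly where the proposition begins: the transverse directions. You describe a plan (supporting lines $l_n$ at hypothetical extreme points $y_n\to p$, sections through each $l_n$, gluing the resulting flat faces to the edge $E$ via Lemma \ref{diamond}) and then concede that making the gluing uniform ``is precisely the crux'' and is not carried out. That uniformity is the entire content of the statement, so this is a genuine gap rather than a missing detail; moreover, nothing in your scheme ties the varying sections through the $l_n$ together, so the flat pieces they produce could a priori shrink with $n$.

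The idea you are missing is to take a single section \emph{transversal} to $[p\,q]$ rather than sections through lines near it. Let $x$ be the midpoint of $[p\,q]$ and pick from the family a plane $H_\alpha$ meeting $[p\,q]$ transversally at $x$ (available because $x\in\partial K$ carries supporting lines transversal to $[p\,q]$). Then $P_\alpha=K\cap H_\alpha$ is a polytope with $x\in\partial P_\alpha$, and its finitely many facets $Q_1,\dots,Q_m$ through $x$, coned with $[p\,q]$ via Lemma \ref{diamond}, give flat pieces $Q_j^{pq}\subset\partial K$ surrounding the segment in \emph{all} transverse directions simultaneously. Taking $\varepsilon$ smaller than $dist\,(p,H_\alpha)$ and than every angle $\angle x_jpq$ over the vertices $x_j$ of these facets, any $y\in K$ with $0<\|p-y\|<\varepsilon$ and $\angle ypq<\varepsilon$ lies in $conv\,\{x_1,\dots,x_N,p,q,s\}$ (with $s$ a fixed interior point) without being one of its vertices, hence is not extreme. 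One fixed polytopal cross‑section thus delivers, in one stroke, the uniformity over transverse directions that your sequential construction leaves open.
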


\begin{proof}
	We consider two possible cases.
	\begin{itemize}
		\item[\underline{Case 1.}] $[pq] \cap int \, K \neq \emptyset$.
		
Assume that there exists a point $x \in (p \, q) \cap int\, K$ (see Figure \ref{fig:pr1}). Then, for a small enough $R = R(x)>0$, ball $B(x,R)$ of radius $R$ centered at $x$ belongs to the interior of $K$,  $B(x,R) \subset int \, K$, and $p,q \not \in B(x,R)$. Hence, we have 
		$$
		(p \, q) \subset conv\, \Big\{B(x,R), p, q\Big\} \subset int \, K.
		$$ 
		
		Thus, we can choose $x$ to be the mid-point of $[p \, q]$ and an $\varepsilon >0$ such that
		$$
		\varepsilon  <	\min \bigg\{\sqrt{\|p-x\|^2 - R^2},  \arcsin \frac{R}{\|p-x\|}\bigg\}.
		$$
		
\begin{figure}[h!]
	\centering
	\includegraphics[scale=0.3]{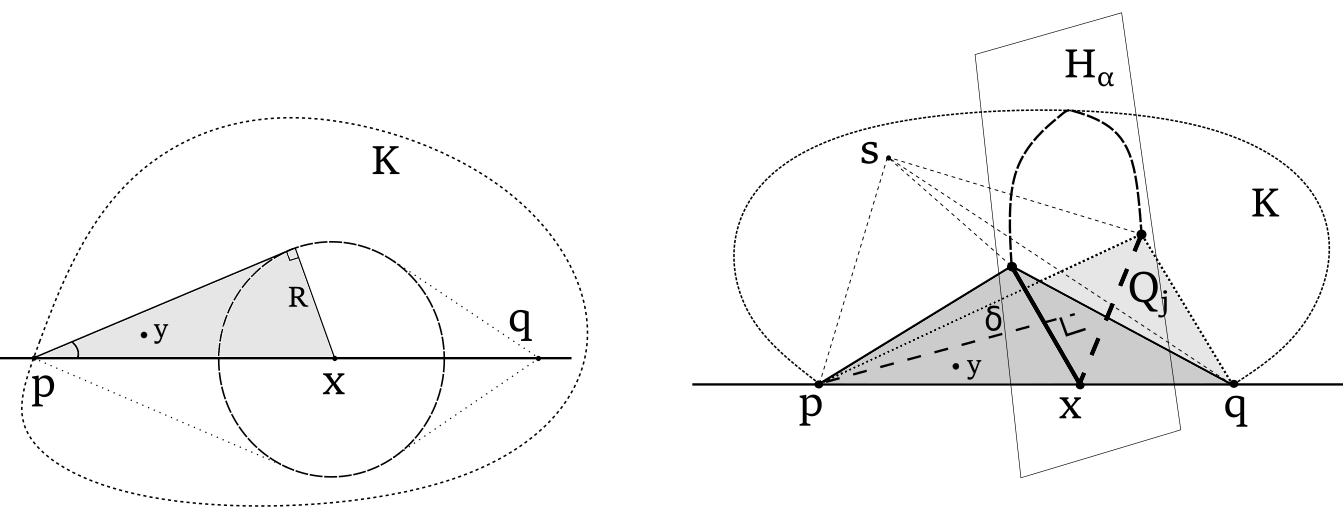}
	\captionsetup{justification=centering}
	\caption{$K$ has no extreme points too close to point $p$ and segment $[p \, q]$. }
	\label{fig:pr1}
\end{figure}

		\item[\underline{Case 2.}] $[p \, q] \subset \partial K.$ 
		
		Let $H_{\alpha}$ be a plane that intersects $[p \, q]$ transversally at the mid-point $x$. Then $x \in \partial P_{\alpha}$ for the polytope $P_{\alpha} = K \cap H_{\alpha}$. Let $Q_1, \ldots, Q_m$ be all the facets of $P_{\alpha}$ that contain $x$, and $\Big\{x_1, \ldots, x_N\Big\}$ be the set of all of their vertices. If $x$ is one of these vertices, exclude it from the set.
By Lemma \ref{diamond}, 
$$
Q_j^{pq}=conv \, \Big\{ Q_j, [p \, q] \Big \} \subset \partial K, \quad \forall j=1,\ldots, m.
$$

Let  a point $s \in int \, K$, $\delta = dist \, (p, H_{\alpha}) > 0$, and 
$$
0 < \varepsilon < \min \Big\{\delta, \min_{j=1,\ldots,N}\big\{ \angle x_jpq\big\}\Big\}.
$$
Then, for any point $y \in K$, such that $\|p - y\| < \varepsilon$ and $\angle ypq< \varepsilon$, we have
$$
y \in conv\, \bigg \{x_1, \ldots, x_N, p, q, s \bigg\}.
$$
Thus, $y$ cannot be an extreme point of $K$.
\end{itemize}
\end{proof}

\begin{remark*}
	 The previous proposition shows that a small enough cone of revolution with apex $p$ and axis of rotation parallel to $[p \, q]$ cannot have any extreme points of $K$ in a small enough neighbourhood of $p$.
\end{remark*}

Now we have all the necessary ingredients to prove Theorem \ref{th2}.

\begin{proof}
	Suppose to the contrary that $\{q_n\}$ is an \textit{infinite} set of distinct extreme points of $K$. In particular, this implies that $\{q_n\}$ is bounded. By  Bolzano–Weierstrass theorem, we can consider a convergent subsequence
	$$
	\{q_{n_k}\} \to p, \quad \textrm{for} \quad n_k \to \infty.
	$$
	Then, $	p \in \partial K$, otherwise, any point in a small neighbourhood of $p$ cannot be extreme.	
	Again, consider a convergent subsequence $\{q_{{n_k}_s}\}$  such that the unit vectors
	$$
	u_{{n_k}_s} = \frac{q_{{n_k}_s} - p}{\|q_{{n_k}_s} - p\|} \to u, \quad \textrm{for} \quad n_{k_s} \to \infty.
	$$

	Let $l$ be the ray passing through $p$ in the direction of $u$. Our last step is to show that, under the assumptions of Theorem \ref{th2}, $l$ must intersect $K$ at more than a single point $p$.  Note that $l$ is a supporting ray of $K$ (otherwise, conditions on $p$ and $u$ are not satisfied). Pick a plane $H$ from the given family that contains $l$.
	Let $m$ be a supporting ray of $K \cap H$ that passes through $p$, contains a non-trivial segment $[p\,q] \subset \partial K$, and has the least angular distance to $l$ (by the angular distance we mean the angle between directional vectors of the rays). The existence of $[p \, q]$ follows from the fact that $K \cap H$ is a polytope.

	  Let $G$ be a hyperplane containing $l$, and such that $H$ is parallel to the normal vector of $G$. Also, let $E_n$ be a hyperplane orthogonal to ray $l$ and passing through point $q_n$ (see Figure \ref{fig:lipsc}).
	 Then let $s_n$ be the orthogonal projection of $q_n$ onto $G$; $b_n$ be the orthogonal projection of $s_n$ (or $q_n$) onto $l$; $b_n$ is the orthogonal projection of $a_n$ on $G$ (or on $l$), $t_n$ is the orthogonal projection of $q_n$ on $H$. Lastly, $a_n =E_n \cap [p\,q]$. Denote
	 \begin{align*}
	&\|p-q_n\| = \varepsilon_1, \quad \angle (q_n \, p \, b_n) = \varepsilon_2, \quad \angle (q_n\, b_n \, s_n) = \xi_n, \\
	& \quad  \angle (a_n \, p \, b_n) = \gamma, \qquad \angle (q_n \, a_n \, t_n) = \phi_n,
	 \end{align*}
 where $\varepsilon_1 \to 0^+, \varepsilon_2 \to 0^+$.
 
 	  \begin{figure}[h!]
 	\centering
 	\includegraphics[scale=0.4]{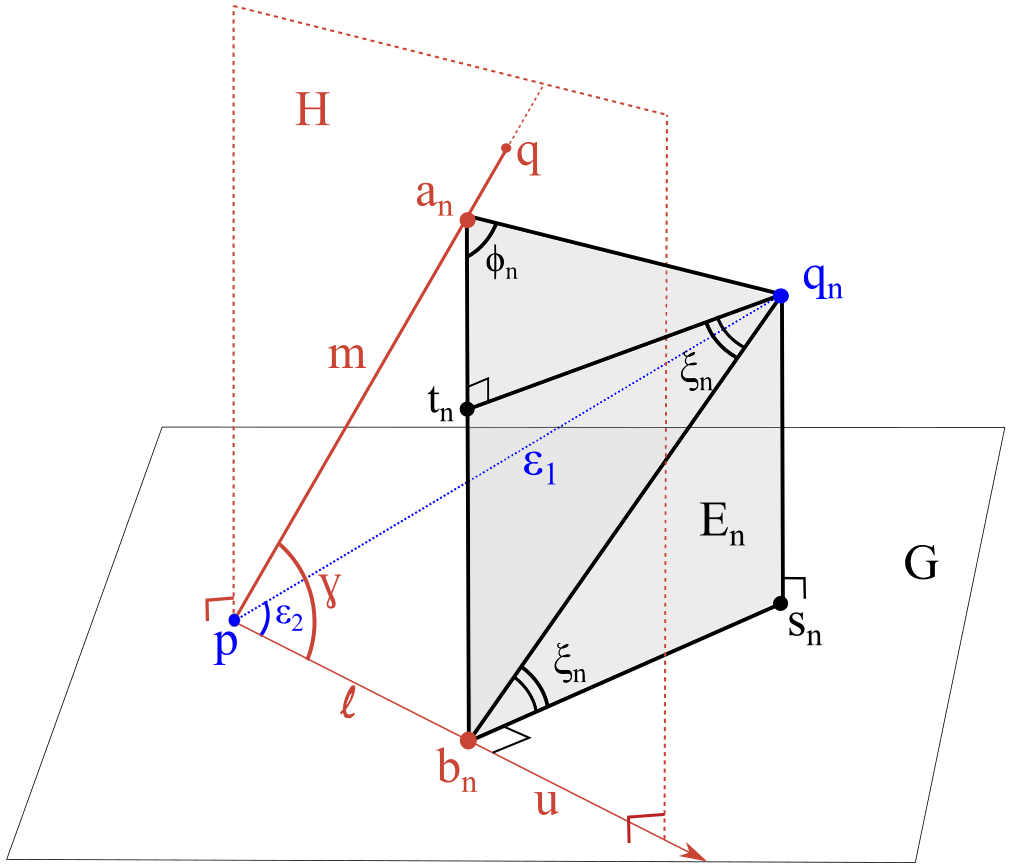}
 	\caption[]{Segment $[p\,q_n]$ is ``drifting" to align with $l$.}
 	\label{fig:lipsc}
 \end{figure}

	 Our goal is to show that $\gamma \equiv 0$. For $0 <\varepsilon < \min\{\varepsilon_1, \varepsilon_2\}$, we have 
	 \begin{align*}
	 &\|b_n - q_n\| = \varepsilon  \sin \varepsilon, \quad \|b_n - p\| = \varepsilon \cos \varepsilon, \quad \cot \phi_n = \frac{\|a_n - t_n\|}{\|t_n - q_n\|},\\
	& \|a_n - b_n\| = \|b_n-p\| \tan \gamma = \varepsilon \cos \varepsilon \tan \gamma, \\
	& \|a_n - t_n\| = \|t_n - q_n\| \cot \phi_n = \|b_n - q_n\| \cos \xi_n \cot \phi_n= \varepsilon \sin \varepsilon \cos \xi_n \cot \phi_n,\\
	&\|b_n - t_n\| = \|b_n - q_n\| \sin \xi_n = \varepsilon \sin \varepsilon \sin \xi_n.
	 \end{align*}
	 Now, $\|a_n - b_n\| \leq \|a_n - t_n\| + \|t_n - b_n\|$, thus
	 \begin{align*}
	 \varepsilon \cos \varepsilon & \tan\gamma \leq \varepsilon \sin \varepsilon \cos \xi_n \cot \phi_n + \varepsilon \sin \varepsilon \sin \xi_n,\\
	& \tan \gamma \leq \tan \varepsilon \cos \xi_n \cot \phi_n + \tan \varepsilon \sin \xi_n.
	 \end{align*}

	 Since $K$ is a convex body, we may define the boundary $\partial K$ locally around $p$ as at most two graphs of convex functions $z = f(x)$, where $G$ can be chosen as the $x$-plane. To see this we note that, by convexity, any non-empty intersection of a normal line to $G$ with $K$ is a closed segment $[z_1 \, z_2]$, where $z_1, z_2 \in \partial \, K$. This correspondence between a point in $G$ and the boundary may not define $\partial K$ as two graphs only if $[z_1 \, z_2] \subset \partial K, z_1 \neq z_2$. Let $w$ be a line in $G$ passing through $p$ and such that $w \cap int \, K \neq \emptyset$. For a small enough neighbourhood of points in $\partial K$, the line $w$ is not parallel to their supporting lines. We may apply a sheaf transformation to ''skew`` $K$ along $w$ towards the interior. Under this affine transformation, $K$ remains convex, and no such othogonal segments contained in $\partial K$ in a small neigbourhood $p$ are possible.
 Luckily, convex functions are locally Lipschitz (\cite{RV}), which implies that
	 $$
	 \cot \phi_n \leq L, \quad L > 0.
	 $$
	 Here $L$ is chosen as the maximum of the two Lipschitz constants in case locally $\partial K$ is represented as two graphs.	Hence, for a constant angle $\gamma \geq 0$, $\lim \limits_{\varepsilon \to 0} \tan \gamma = 0$. We conclude that $\gamma \equiv 0$ and $l=m$.  In these settings, we apply Proposition \ref{cone} for the segment $[p \, q]$ to observe that $q_n$ cannot be extreme in a small neighbourhood of $p$, however the sequence of extreme points $ q_n \to p$ for $n \to \infty$.
	This yields a contradiction to the assumption on an infinite number of extreme points of $K$.
\end{proof}

\section{Proof of Theorem \ref{th}}
\subsection{Case $d=3$}

First, we prove that any extreme point of a $2$-dim projection of $K$ is an intersection of an edge of a visual cone with the $2$-dim plane of the projection. Then, we show that every $2$-dim projection of $K$ is a polygon. Thus, by Theorem \ref{K2} in $d=3$, we conclude that $K$ is a polytope.

\begin{proof}
Choose an arbitrary $\xi \in S^2$. For any $x \in \partial (K|\xi^{\perp})$, consider the line
$$
l_{x} = \{x + t \cdot \xi, \quad t \in \R\}.
$$
Then take $z_{\beta} \in l_x$ and consider the support cone $C(z_{\beta}, K)$ (see Figure \ref{boo}). 

\begin{figure}[h!]
	\centering
	\includegraphics[scale=0.4]{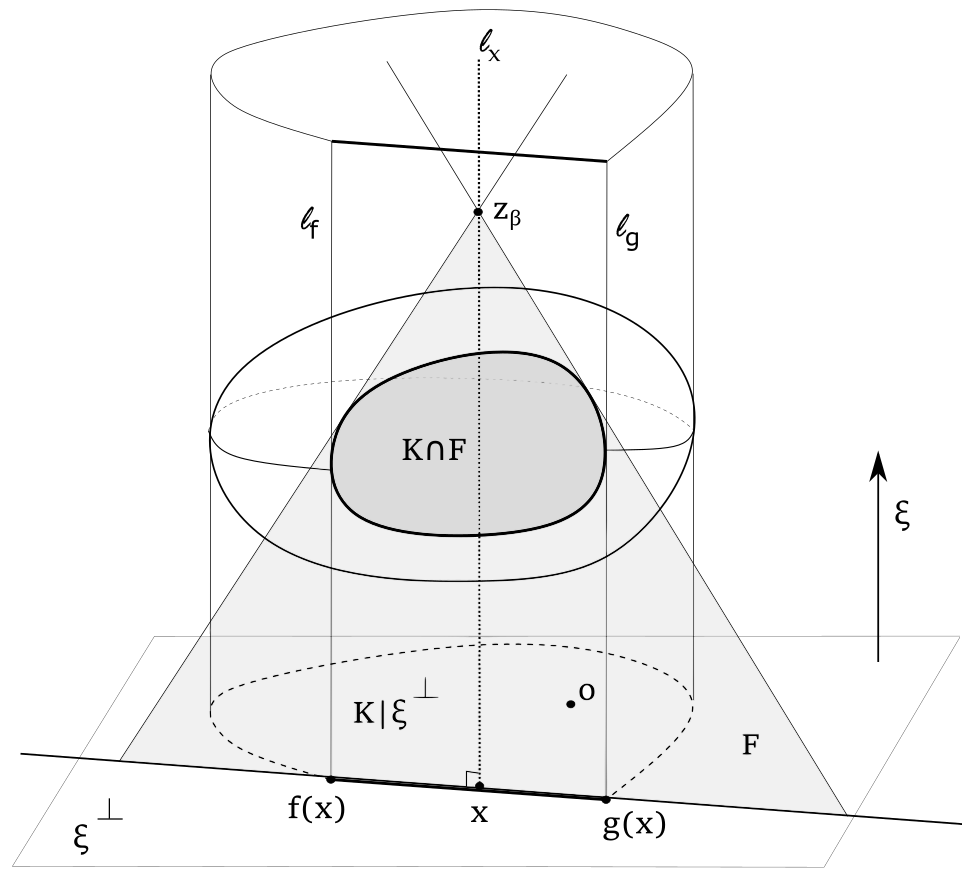}
	\caption{Distinguishing extreme points of $K| \xi^{\perp}$.}
	\label{boo}
\end{figure}

Assume that $l_x$ intersects the relative interior of the facet of cone $C(z_{\beta},K)$ contained in a plane $F$. Then $K \cap F$ is a convex subset of $F$, and consider two distinct lines $l_f, l_g$ parallel to $\xi$ that are supporting lines of $K \cap F$ in $F$. Denote 
$$
f(x) = l_f \cap \xi^{\perp}, \qquad g(x) = l_g \cap \xi^{\perp}.
$$
Hence, the orthogonal projection of $K \cap F$ onto $\xi^{\perp}$ is the non-degenerate segment $[f(x) \, g(x)]$. Which also implies that $x$ is not an extreme point of $K| \xi^{\perp}$. We repeat the same consideration for $g(x)$ and a point $z_{\beta'} \in S \cap l_g$ to note that $l_g$ may not intersect the relative interior of a facet of $C(z_{\beta'},K)$, otherwise $l_g$ is projected onto an interior point of $[f(x) \, g(x)]$. Thus, $l_g$ contains an edge of $C(z_{\beta'},K)$ and $g(x)$ is an isolated extreme point of $K| \xi^{\perp}$. We continue this procedure counterclockwise starting from $g(x)$ and preserving the notation to obtain a countable sequence of extreme points $\big\{v_j(x)\big\}_j$, where
$$
v_0(x) = x, \quad v_{j+1}(x) = g\big (v_{j}(x) \big), \quad j \geq 0.
$$

\begin{figure}[h]
	\centering
	\includegraphics[scale=0.15]{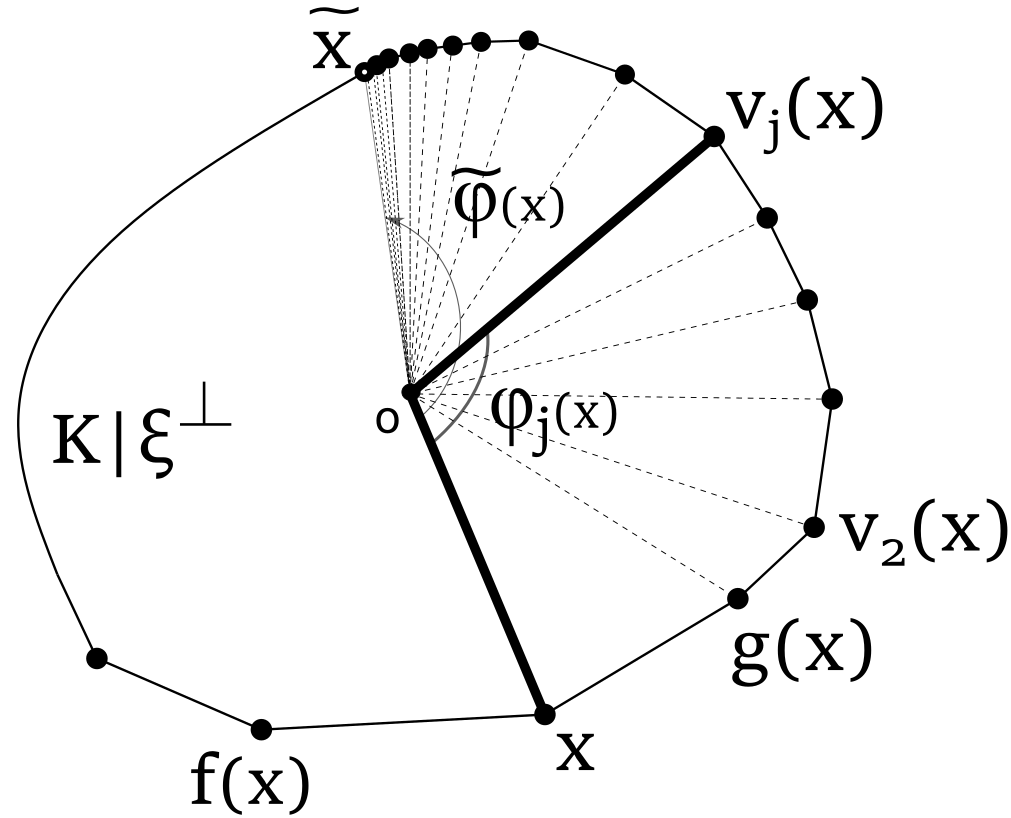}
	\caption{The extreme points of $K|\xi^{\perp}$ may not accumulate.}
	\label{koo-koo}
\end{figure}

We claim that $\big \{v_j(x) \big \}_j$ is finite. Assume the opposite, and
let us use a polar coordinate system in $\xi^{\perp}$ with the pole $o$ in the interior of the projection, and value $\varphi = 0$ corresponding to point $x$ (see Figure \ref{koo-koo}). This way we obtain a sequence of angles $\varphi_j(x) = \angle xo v_j(x)  $. Observe that $\{\varphi_j(x)\}$ is strictly increasing  (by the construction) and bounded ($0 < \varphi_j(x) < 2\pi$). The monotone convergence theorem implies that
$$
\exists \lim_{j \to \infty} \varphi_j(x) = \tilde{\varphi}(x),
$$
which corresponds to some point $\tilde{x} \in \partial(K| \xi^{\perp})$. Hence, any neighbourhood of $\tilde{x}$ in $\xi^{\perp}$ must contain an extreme point.

On the other hand, the closest extreme point to $\tilde{x}$ is at a fixed distance 
$$
\min\Big\{\|\tilde{x}-g(\tilde{x})\|, \|\tilde{x}-f(\tilde{x})\|\Big\}>0,
$$ 
regardless whether $\tilde{x}$ is an extreme point itself or not.
This yields a contradiction to the assumption that $\{v_j(x)\}$ is infinite. It implies that $K|\xi^{\perp}$ has a finite number of extreme points, i.e.,  $K|\xi^{\perp}$ is a polygon. Thus, by Theorem \ref{K2}, $K$ is a polytope.
\end{proof}

\subsection{Case $3 \leq k \leq d$}

\begin{proof} 
Consider any $3$-dim subspace $H$,  set  $S_H = \{z_{\beta}: \, z_{\beta} \in H\}$, and section $K_H = K \cap H$. By Lemma \ref{polycones}, for any point $z \in S_H$, cone $ C(z,K_H) = C_k(z,K) \cap H$ is polyhedral. Hence, the previous considerations apply, and projection of $K_H$ on any $2$-dim subspace of $H$ is a polygon.
Thus, by Theorem \ref{K2}, $K_H$ is a polytope. 
Since any $2$-dim subspace of $\mathbb{E}^d$ is a subspace of some $3$-dim
subspace $H$, by Theorem \ref{K2} we conclude that $K$ is a polytope as well.
\end{proof}

\section*{Acknowledgement}
The author would like to express his gratitude to Anton Petrunin for an enlightening conversation, to the anonymous Referee for the valuable remarks that improved the paper, and to the Pacific Institute for the Math Sciences (PIMS) for the continuous support.



\begin{thebibliography}{9}
	
\bibitem[BSW]{BSW} F. Besau, C. Schuett, E. Werner, {\em Flag Numbers and Floating Bodies}, Advances in Math. \textbf{338} (2018), pp. 912 -- 952.
	
\bibitem[B]{B}	W. Blaschke {\em Vorlesungen über Differentialgeometrie und geometrische Grundlagen von Einsteins Relativitätstheorie II. Affine Differentialgeometrie} (German), Grundlehren der mathematischen Wissenschaften \textbf{7} (1923), Springer.

	\bibitem[Bo]{Bo} V. Boltyanskiĭ \textit{Illumination problem for convex bodies (in Russian)}, Bul. Izv. Akad. Nauk Moldav. SSR \textbf{10 (76)} (1960), pp. 77 -- 84.
	
	\bibitem[BG]{BG} G. Bianchi, P. M. Gruber, {\em Characterizations of ellipsoids}, Arch. Math. (Basel) \textbf{49}  (no. 4, 1987), pp. 344 -- 350. 
	
	\bibitem[BH]{BH} H. Bieri, H. Hadwiger, \textit{On the problem of the complete system of inequalities for convex bodies of revolution (in German)}, Elem. Math. \textbf{12} (1957), pp. 101 -- 108.
	
	 \bibitem[BL]{BL} K. Bezdek, A. E. Litvak, \textit{On the vertex index of convex bodies}, Adv. Math. \textbf{215} (2007), no. 2, pp. 626 -- 641.
	
	\bibitem[Ga]{Ga} R. J.~Gardner, {\em Geometric tomography, 2nd edition}., Encyclopedia of Mathematics and its Applications {\bf 58} (2006), Cambridge University Press.
	
	\bibitem[Gr]{Gr} P. M. Gr\"uber, {\em Convex and Discrete Geometry}, Springer-Verlag Berlin Heidelberg (2007).
	
	\bibitem[GO]{GO} P. M. Gr\"uber, T. Ódor, {\em Ellipsoids are the most symmetric convex bodies}, Arch. Math. (Basel) \textbf{73} (no. 5, 1999), pp. 394 -- 400.
	
	\bibitem[K]{K}  V. Klee, {\em Some characterizations of convex polyhedra}, Acta Math. \textbf{102} (1959), pp. 79--107.
	
	\bibitem[KO]{KO} \'A. Kurusa, T. \'Odor, {\em Characterizations of balls by sections and caps}, Beitr. Algebra Geom.\textbf{ 56} (no. 2, 2015), pp. 459 -- 471.
	
	\bibitem[Ku]{Ku} \'A. Kurusa, {\em You can recognize the shape of a figure from its shadows!}, Geom. Dedicata \textbf{59} (no. 2, 1996),  pp. 113 -- 125.
	
	 
	 \bibitem[M]{M} S. Matsuura, {\em A problem in solid geometry}, J. Math. Osaka City Univ \textbf{12} (1961), pp. 89 -- 95.
	 
	 \bibitem[Mi]{Mi} H. Mirkil, {\em A new characterizations of polyhedral cones}, Canadian J. Math. \textbf{9 }(1957), pp. 1--4.
	 
	 \bibitem[MW]{MW} O. Mordhorst, E. Werner, {\em Floating and illumination bodies for polytopes: duality results.} Discrete Anal. (2019), Paper No. \textbf{11},  22 pp.
	 
	 \bibitem[My]{My} S. Myroshnychenko, {\em On recognizing shapes of polytopes from their shadows}, Discrete Comput. Geom. \textbf{62}, pp. 856--864 (2019).
	 
	 \bibitem[RV]{RV} A. W. Roberts, D. E. Varberg, {\em Another proof that convex functions are locally Lipschitz}, The American Mathematical Monthly, Vol. 81, No. \textbf{9} (Nov., 1974), pp. 1014 -- 1016.
	 
	 \bibitem[RY]{RY} D. Ryabogin, V. Yaskin, {\em Detecting symmetry in star bodies}, J. of Math. Analysis and Applications \textbf{395} (2012), pp. 509 -- 514.

	\bibitem[S]{S} R. Schneider, {\em Convex bodies: the Brunn-Minkowski theory}, 2nd edition, (1993, 2014), Cambridge University Press.
	
	
	
	\bibitem[Y]{Y} V. Yaskin, {\em Unique determination of convex polytopes by non-central sections}, Math. Ann. \textbf{349}, (no. 3, 2011), pp. 647 -- 655.
	
	\bibitem[YZ]{YZ} V. Yaskin, N. Zhang, {\em Non-central sections of convex bodies}, Israel J. Math. \textbf{220} (2017), pp.~763 -- 790.
	
	\bibitem[Z]{Z} J. Zanazzi, {\em A short proof of Klee's theorem},  Discrete Math. \textbf{314} (2014), pp. 14 -- 16.
	
\end{thebibliography}
\end{document}